\newtheorem{theorem}{Theorem}[section]
\newtheorem{lemma}[theorem]{Lemma}
\newtheorem{corollary}[theorem]{Corollary}
\theoremstyle{remark}
\newtheorem{remark}[theorem]{Remark}
\theoremstyle{definition}
\newtheorem{example}[theorem]{Example}
\newtheorem{definition}[theorem]{Definition}
\title{Bouquets, Vertex Covers and Edge Ideals}
 \author{Nursel Erey\footnote{Department of Mathematics and Statistics, Dalhousie University, Halifax, Nova Scotia, Canada, e-mail: nurselerey@gmail.com}}
\date{\today }
\providecommand{\keywords}[1]{\textbf{\textit{Keywords---}} #1}
\date{\vspace{-5ex}} 
\begin{document}
\maketitle

\begin{abstract}
We give a new combinatorial characterization of the big height of a squarefree monomial ideal leading to a new bound for the projective dimension of a monomial ideal.

\end{abstract}

\keywords{Big height, Edge ideal, Hypergraph, Minimal vertex cover}

\section{Introduction}
Let $\Bbbk$ be a field and $S=\Bbbk[x_1,\dots,x_n]$. The \emph{edge ideal} of a simple hypergraph $\mathcal{H}$ is a squarefree monomial ideal $I(\mathcal{H})\subseteq S$ given by
$$ I(\mathcal{H})=(x_{i_1}\dots x_{i_t} : \{x_{i_1},\dots,x_{i_t}\} \text{ is an edge of } \mathcal{H}). $$
 
A current research topic in commutative algebra is to express or bound the invariants of the minimal free resolution of the squarefree monomial ideal coming from a simple hypergraph in terms of its combinatorial properties (see, for example,  \cite{dao schweig} -- \cite{zheng}). Many authors introduced new graph parameters and notions in this context.

A subset $C$ of vertices of $\mathcal{H}$ is called a \emph{vertex cover} of $\mathcal{H}$ if every edge in $\mathcal{H}$ contains an element of $C$. A vertex cover $C$ is \emph{minimal} if no proper subset of $C$ is a vertex cover of $\mathcal{H}$. We write $\alpha_0^\prime (\mathcal{H})$ for the maximum possible cardinality of a minimal vertex cover of $\mathcal{H}$. There is a one-to-one correspondence between the minimal vertex covers of $\mathcal{H}$ and the \emph{minimal prime ideals} of $I(\mathcal{H})$ given by
$$C \text{ is a minimal vertex cover of } \mathcal{H} \Leftrightarrow (x_i : i\in C) \text{ is a minimal prime ideal of } I(\mathcal{H}). $$
Therefore the parameter $\alpha_0^\prime (\mathcal{H})$ coincides with the \emph{big height} of $I(\mathcal{H})$, which is the maximum height of the minimal prime ideals of $I(\mathcal{H})$. 
It is known that the big height of $I(\mathcal{H})$ is a lower bound for the projective dimension $\operatorname{pd}(S/I(\mathcal{H}))$ of $S/I(\mathcal{H})$, see, for example \cite[Corollary~3.33]{morey villarreal} for a proof. In fact, it is a sharp bound in the following sense.
\begin{theorem}\label{pd of scm is big height} \emph{\cite{faridi, morey villarreal}} For a simple hypergraph $\mathcal{H}$, the equality $\operatorname{pd}(S/I(\mathcal{H}))=\alpha_0^\prime (\mathcal{H})$ holds if $\mathcal{H}$ is sequentially Cohen-Macaulay, i.e., $S/I(\mathcal{H})$ is sequentially Cohen-Macaulay.
\end{theorem}
In this work, we will give a new characterization of $\alpha_0^\prime (\mathcal{H})$, or equivalently, the big height of $I(\mathcal{H})$, in terms of partial hypergraphs known as \emph{bouquets} of $\mathcal{H}$. This characterization is useful if one is interested in the structure of the minimal vertex covers while bounding the projective dimension via the big height.

Given a hypergraph $\mathcal{H}$ and a minimal vertex cover $A$ of size $i$, since $\alpha_0'(\mathcal{H})$ bounds $\operatorname{pd} (S/I(\mathcal{H}))$, there exists a non-zero Betti number $b_{i}(S/I(\mathcal{H}))$. It is interesting to know what conditions on $A$ give rise to non-zero Betti numbers of $S/I(\mathcal{H})$ in homological degree $i$. A partial answer to this question was given in \cite[Theorem~3.1]{kimura} where the author developed the concept of strongly disjoint bouquets to give sufficient conditions for non-vanishing Betti numbers of edge ideals of simple graphs.

Although the projective dimension of a sequentially Cohen-Macaulay edge ideal has a closed formula, its other algebraic invariants are not so well understood. It is an open problem for instance to find the regularity of edge ideals of vertex decomposable hypergraphs, which form a subfamily of sequentially Cohen-Macaulay edge ideals. As another example, no explicit combinatorial description is known for the Betti numbers of edge ideals of chordal graphs which are known to be vertex decomposable \cite{Woodroofe}.

Since many tools for studying resolutions of edge ideals involve the edges of the underlying hypergraphs instead of their vertices (Taylor's resolution or simplicial resolutions in general), our characterization of minimal vertex covers may become convenient while studying similar questions mentioned above.


\section{Definitions}
A \emph{simple hypergraph} $\mathcal{H}$ on a finite set $V(\mathcal{H})$ is a family $\mathcal{H}=(\mathcal{E}_1,\dots,\mathcal{E}_d)$ of subsets of $V(\mathcal{H})$ such that
\begin{itemize}
\itemsep-0.2em
\item[(1)] $\mathcal{E}_i\neq \emptyset$ for all $i=1,\dots,d$
\item[(2)] $\bigcup_{i=1}^d\mathcal{E}_i=V(\mathcal{H})$
\item[(3)] $\mathcal{E}_i\subseteq \mathcal{E}_j \Longrightarrow i=j$.
\end{itemize}
The elements of $V(\mathcal{H})$ are called the \emph{vertices} and the sets $\mathcal{E}_1,\dots,\mathcal{E}_d$ are the \emph{edges} of $\mathcal{H}$. We write $E(\mathcal{H})$ for the set of edges of $\mathcal{H}$. If every edge of a simple hypergraph consists of two elements, then it is called a \emph{simple graph}.

A simple hypergraph $\mathcal{K}$ is said to be a \emph{partial hypergraph} of $\mathcal{H}$ if $\mathcal{K}=(\mathcal{E}_j : j\in J)$ for some $J\subseteq \{1,\dots,d\}$. For a set $A\subseteq V(\mathcal{H})$, we define the \emph{partial hypergraph of} $\mathcal{H}$ \emph{on} $A$ as $\mathcal{H}|_A=(\mathcal{E} \in E(\mathcal{H}) : \mathcal{E}\subseteq A )$. The family $\mathcal{H}_A=(\mathcal{E}_j\cap A : 1\leq j \leq d, \mathcal{E}_j\cap A\neq \emptyset)$ is called the \emph{subhypergraph induced by} $A$. Note that $\mathcal{H}|_A$ and $\mathcal{H}_A$ are different hypergraphs. We say that $A$ is \emph{independent} in $\mathcal{H}$ if $\mathcal{E} \nsubseteq A$ for all $\mathcal{E}\in E(\mathcal{H})$.

\begin{definition}[Bouquet]\label{def:bouquet}(Compare to  \cite[Definition~1.7]{zheng})  A \emph{bouquet} is a simple hypergraph ${\mathcal{B}}=(\mathcal{E}_1,\dots,\mathcal{E}_d)$ together with an assigned set of \emph{flowers} $F(\mathcal{B})=\{\ell_1,\dots,\ell_d\}$ such that
\begin{itemize}
\itemsep-0.2em
\item[(1)] $ \bigcap_{i=1}^d\mathcal{E}_i \neq \emptyset $ 
\item[(2)] $\ell_i \in \mathcal{E}_j \Leftrightarrow i=j $, for all $i, j\in\{1,\ldots , d \}$.
\end{itemize}
\end{definition}
The notion of bouquet for simple graphs was introduced by Zheng \cite{zheng} to study resolutions of edge ideals of forests. Our definition generalizes this to arbitrary simple hypergraphs.
Observe that one can assign flowers to a simple hypergraph in different ways to make it a bouquet. However if the bouquet $\mathcal{B}$ is a simple graph with at least two edges, then its flowers are automatically determined.
\begin{figure}[ht]
\begin{center}
\begin{tikzpicture}[scale= 0.4]
    \node (v1) at (0,0) {};
    \node (v2) at (-4,4) {};
    \node (v3) at (0,5) {};
    \node (v4) at (4,4) {};
    \node (v5) at (3,2.3) {};
    \node (v6) at (1,2) {};
    \node (v7) at (1.3,1.5) {};
    \node (v8) at (-3,1.8) {};
    \node (v9) at (0.2, 0.6) {};
    \node (v10) at (-1.2, 1.5) {};
    \node (v11) at (-2.7, 2.7) {};

    \begin{scope}[fill opacity=0.8]

\filldraw[fill=black!30][thick] ($(v1)+(0,-0.7)$) 
        to[out=180,in=180] ($(v3) + (0,1)$) 
        to[out=0,in=0] ($(v1) + (0,-0.7)$);
    
\filldraw[fill=black!20][thick] ($(v1)+(0,-0.7)$) 
        to[out=180,in=180] ($(v4) + (0,1)$) 
        to[out=0,in=0] ($(v1) + (0,-0.7)$);
\filldraw[fill=black!10][thick] ($(v1)+(0,-0.7)$) 
        to[out=180,in=180] ($(v2) + (-1,1)$) 
        to[out=0,in=0] ($(v1) + (0,-0.7)$)
         
     ;
   
    \end{scope}

    \foreach \v in {1,2,...,11} {
        \fill (v\v) circle (0.1);
    }

    \fill (v2) circle (0.1) node [left] {$\ell_1$};
    \fill (v3) circle (0.1) node [left] {$\ell_2$};
    \fill (v4) circle (0.1) node [left] {$\ell_3$};

    
\end{tikzpicture}
\end{center}
\caption{A bouquet with flowers $\ell_1, \ell_2, \ell_3$}
\end{figure}

If a partial hypergraph $\mathcal{B}$ of $\mathcal{H}$ is a bouquet, then we simply say that $\mathcal{B}$ is a bouquet of $\mathcal{H}$. Suppose that $\bm{\mathcal{B}}=\{\mathcal{B}_1,\dots,\mathcal{B}_j\}$ is a set of bouquets of $\mathcal{H}$. Then we set
$$F(\bm{\mathcal{B}}):= \bigcup_{i=1}^{j}F(\mathcal{B}_i), \ \ E(\bm{\mathcal{B}}):= \bigcup_{i=1}^{j}E(\mathcal{B}_i)  \text{ and } V(\bm{\mathcal{B}}):= \bigcup_{i=1}^{j}V(\mathcal{B}_i).$$

We call $F(\bm{\mathcal{B}}), E(\bm{\mathcal{B}}) $ and $V(\bm{\mathcal{B}})$ the \emph{flower set}, the \emph{edge set} and the \emph{vertex set} of $\bm{\mathcal{B}}$ respectively. 

\begin{definition}(Compare to \cite[Definition~5.1]{kimura})
\label{semi strongly disjoint hypergraph} A set $\bm{\mathcal{B}}=\{\mathcal{B}_1,\mathcal{B}_2,\dots,\mathcal{B}_j\}$ of bouquets of a simple hypergraph $\mathcal{H}$ is said to be \emph{semi-strongly disjoint} in $\mathcal{H}$ if the following conditions hold.
\begin{itemize}
\itemsep-0.2em
\item[(1)] If $\mathcal{E}\in E(\mathcal{B}_p)$ then $\mathcal{E}\cap F(\mathcal{B}_q)=\emptyset$ for all $q\neq p$.

\item[(2)] $V(\bm{\mathcal{B}})\setminus F(\bm{\mathcal{B}})$ is independent in $\mathcal{H}$.
\end{itemize}
And we set
$$d_{\mathcal{H}}^\prime= \operatorname{max}\{|E(\bm{\mathcal{B}})| : \bm{\mathcal{B}} \text{ is a semi-strongly disjoint set of bouquets of } \mathcal{H}\}$$
or, from Definition~\ref{semi strongly disjoint hypergraph}(1) and Definition~\ref{def:bouquet}(2), equivalently

$$d_{\mathcal{H}}^\prime= \operatorname{max}\{|F(\bm{\mathcal{B}})| : \bm{\mathcal{B}} \text{ is a semi-strongly disjoint set of bouquets of } \mathcal{H}\}.$$
\end{definition}

Note that for a simple graph $G$ the inequality $\alpha_0^\prime (G)\geq d_G^\prime$ was  proved in  \cite[Proposition~2.7]{c-5 free Khosh Moradi}. However $\alpha_0^\prime (G)= d_G^\prime$ was known only for the special case of vertex decomposable graphs (see, \cite[Theorem~3.8]{khosh moradi}). We will see that in fact, the parameters $\alpha_0^\prime (G)$ and $d_G^\prime$ are the same.


\section{Proof of the main result}
\begin{lemma}\label{l: max cardinality min vertex cover number} Let $\mathcal{H}$ be a simple hypergraph. If $\mathcal{K}=\mathcal{H}|_U$ for some $U\subseteq V(\mathcal{H})$, then any minimal vertex cover of $\mathcal{K}$ can be extended to a minimal vertex cover of $\mathcal{H}$. In particular, $\alpha_0^\prime (\mathcal{K})\leq \alpha_0^\prime (\mathcal{H})$.
\end{lemma}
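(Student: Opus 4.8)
The plan is to take a minimal vertex cover $C$ of $\mathcal{K} = \mathcal{H}|_U$ and explicitly enlarge it to a minimal vertex cover of $\mathcal{H}$. The key observation is that $C$ covers every edge of $\mathcal{H}$ that is contained in $U$, but may fail to meet edges of $\mathcal{H}$ that stick out of $U$. A natural first candidate is $C \cup (V(\mathcal{H}) \setminus U)$: this set clearly covers all of $\mathcal{H}$, since any edge $\mathcal{E}$ either lies inside $U$ (and is covered by $C$) or contains a vertex outside $U$ (and is covered by $V(\mathcal{H}) \setminus U$). The issue is that this enlarged set need not be \emph{minimal}, so the second step is to pass to a minimal vertex cover contained in it that still contains $C$.

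The standard way to do this is to delete redundant vertices one at a time: starting from $C' = C \cup (V(\mathcal{H}) \setminus U)$, repeatedly remove a vertex $v \in C' \setminus C$ whose removal still leaves a vertex cover, until no such vertex exists. Call the result $D$. By construction $C \subseteq D \subseteq C'$ and $D$ is a vertex cover of $\mathcal{H}$ in which no vertex outside $C$ is redundant. The one thing that needs an argument is that no vertex \emph{inside} $C$ is redundant in $D$ either — i.e. that $D$ is genuinely minimal, not merely minimal among covers containing $C$. Here is where minimality of $C$ in $\mathcal{K}$ is used: suppose $D \setminus \{v\}$ were still a vertex cover of $\mathcal{H}$ for some $v \in C$. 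Since $C \subseteq V(\mathcal{K}) \subseteq U$, we have $v \in U$; restricting the cover $D \setminus \{v\}$ to the vertex set $U$ gives a subset of $D \cap U$. I would then check that $D \cap U$ is in fact equal to $C$: any vertex of $D$ lying in $U$ either came from $C$ or came from $V(\mathcal{H})\setminus U$, and the latter is impossible since $(V(\mathcal{H})\setminus U)\cap U = \emptyset$. Hence $D \cap U = C$, so $(D \setminus \{v\}) \cap U = C \setminus \{v\}$ would be a vertex cover of $\mathcal{K}$ (every edge of $\mathcal{K}$ lies in $U$, so it is covered by a vertex of $D\setminus\{v\}$ which must lie in $U$), contradicting the minimality of $C$. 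Therefore $D$ is a minimal vertex cover of $\mathcal{H}$ extending $C$.

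The "in particular" statement is then immediate: if $C$ is a minimal vertex cover of $\mathcal{K}$ of maximum cardinality $\alpha_0^\prime(\mathcal{K})$, the extension $D$ is a minimal vertex cover of $\mathcal{H}$ with $|D| \geq |C| = \alpha_0^\prime(\mathcal{K})$, so $\alpha_0^\prime(\mathcal{H}) \geq \alpha_0^\prime(\mathcal{K})$.

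I do not expect a serious obstacle here; the only subtlety — and the step I would write most carefully — is verifying that the greedily pruned set $D$ is minimal with respect to \emph{all} of its vertices, including those originally in $C$, which is exactly where the hypothesis that $C$ is minimal in $\mathcal{H}|_U$ (rather than an arbitrary vertex cover) enters. One should also make sure the edge sets are nonempty so that "vertex cover" behaves sensibly, but that is part of the definition of a simple hypergraph and causes no trouble.
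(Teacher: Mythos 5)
Your proof is correct and takes essentially the same approach as the paper: both adjoin the vertices outside $U$ (the paper uses $V(\mathcal{H})\setminus V(\mathcal{K})$, which works the same way) to $C$, prune the result down to a minimal vertex cover, and invoke the minimality of $C$ in $\mathcal{K}$ to conclude that no element of $C$ is discarded. The only cosmetic difference is that you protect $C$ during the pruning and then verify its elements are not redundant, whereas the paper prunes freely to a minimal cover $C'$ and deduces $C\subseteq C'$ afterwards from the fact that $C'\setminus A$ is a cover of $\mathcal{K}$ contained in $C$.
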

\begin{proof}
Suppose that $V(\mathcal{H})\setminus V(\mathcal{K})=A$ and $C$ is a minimal vertex cover of $\mathcal{K}$. If $A=\emptyset$ then $\mathcal{K}=\mathcal{H}$ and there is nothing to prove. So we assume that $A\neq \emptyset$. Clearly $A\cap C=\emptyset$ and $A\cup C$ covers $\mathcal{H}$. By removing the redundant elements from $A \cup C$, one can get a minimal vertex cover $C' \subseteq A\cup C$ of $\mathcal{H}$. But then $C'\setminus A$ is a minimal vertex cover of $\mathcal{K}$. Since $C'\setminus A\subseteq C$ we get $C'\setminus A=C$ by minimality of $C$. Thus $C\subseteq C'$ is the desired extension.

\end{proof}
\begin{remark} The Lemma above is not necessarily true if $\mathcal{K}$ is an arbitrary partial hypergraph of $\mathcal{H}$. See for example Figures \ref{fig:minipage1} and \ref{fig:minipage2}.
\end{remark}
\begin{figure}[ht]
\begin{minipage}[b]{0.48\linewidth}
\begin{center}
\begin{tikzpicture}
[scale=0.8, vertices/.style={draw, fill=black, circle, inner sep=1.7pt}]
\node[vertices, label=below:{}] (a) at (-2,1) {};
\node[vertices, label=right:{}] (b) at (-1,0) {};
\node[vertices, label=left:{}] (c) at (-2,-1) {};
\node[vertices, label=above:{}] (d) at (2,1) {};
\node[vertices, label=above:{}] (e) at (1,0) {};
\node[vertices, label=above:{}] (f) at (2,-1) {};
\foreach \to/\from in {a/b,b/c,e/d,e/f}
\draw [thick] [-] (\to)--(\from);
\end{tikzpicture}
\end{center}
\caption{A simple graph $K$ with $\alpha_0^\prime (K)=4$}
\label{fig:minipage1}
\end{minipage}
\quad
\quad
\begin{minipage}[b]{0.48\linewidth}
\begin{center}
\begin{tikzpicture}
[scale=0.8, vertices/.style={draw, fill=black, circle, inner sep=1.7pt}]
\node[vertices, label=below:{}] (a) at (-2,1) {};
\node[vertices, label=right:{}] (b) at (-1,0) {};
\node[vertices, label=left:{}] (c) at (-2,-1) {};
\node[vertices, label=above:{}] (d) at (2,1) {};
\node[vertices, label=above:{}] (e) at (1,0) {};
\node[vertices, label=above:{}] (f) at (2,-1) {};
\foreach \to/\from in {a/b,b/c,e/d,e/f, b/e}
\draw [thick] [-] (\to)--(\from);
\end{tikzpicture}
\end{center}
\caption{A simple graph $H$ with $\alpha_0^\prime (H)=3$}
\label{fig:minipage2}
\end{minipage}
\end{figure}

We now prove the main result of this paper.
\begin{theorem}\label{l: equality max min vertex cover versus max semi-strongly disjoint} For any simple hypergraph $\mathcal{H}$, the flower set of a semi-strongly disjoint set of bouquets of $\mathcal{H}$ can be extended to a minimal vertex cover of $\mathcal{H}$. Conversely, for any minimal vertex cover $C$ of $\mathcal{H}$, there exists a semi-strongly disjoint set of bouquets of $\mathcal{H}$ with the flower set $C$. In particular, the equality $\alpha_0^\prime (\mathcal{H})= d_\mathcal{H}^\prime $ holds.
\end{theorem}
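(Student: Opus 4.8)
The plan is to prove the two inequalities $d_\mathcal{H}^\prime \leq \alpha_0^\prime(\mathcal{H})$ and $d_\mathcal{H}^\prime \geq \alpha_0^\prime(\mathcal{H})$ by establishing, respectively, the two sentences in the statement. The engine for both is the elementary correspondence between minimal vertex covers and maximal independent sets, which I would record first: a set $C\subseteq V(\mathcal{H})$ is a minimal vertex cover of $\mathcal{H}$ if and only if $V(\mathcal{H})\setminus C$ is a maximal independent set of $\mathcal{H}$ (if $I$ is a maximal independent set, then $V(\mathcal{H})\setminus I$ is a cover, and minimality comes from the witnessing edge $\mathcal{E}\subseteq I\cup\{v\}$ produced by maximality for each $v\notin I$; the converse is just as short).

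For the first sentence, let $\bm{\mathcal{B}}=\{\mathcal{B}_1,\dots,\mathcal{B}_j\}$ be semi-strongly disjoint and set $W=V(\bm{\mathcal{B}})\setminus F(\bm{\mathcal{B}})$, which is independent by Definition~\ref{semi strongly disjoint hypergraph}(2). The crucial observation is that every flower has a private witnessing edge all of whose other vertices lie in $W$: if $\ell$ is a flower of $\mathcal{B}_p$ and $\mathcal{E}\in E(\mathcal{B}_p)$ is the edge it determines, then $\mathcal{E}\cap F(\mathcal{B}_p)=\{\ell\}$ by the bouquet axiom, while $\mathcal{E}\cap F(\mathcal{B}_q)=\emptyset$ for $q\neq p$ by condition~(1), so $\mathcal{E}\setminus\{\ell\}\subseteq W$. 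Now extend $W$ greedily to a maximal independent set $I$ of $\mathcal{H}$; if some flower $\ell$ lay in $I$, then $\mathcal{E}=\{\ell\}\cup(\mathcal{E}\setminus\{\ell\})\subseteq\{\ell\}\cup W\subseteq I$ would put an edge of $\mathcal{H}$ inside $I$, a contradiction. Hence $F(\bm{\mathcal{B}})\subseteq V(\mathcal{H})\setminus I$, and $V(\mathcal{H})\setminus I$ is a minimal vertex cover extending $F(\bm{\mathcal{B}})$. Combining this with the bookkeeping fact $|F(\bm{\mathcal{B}})|=|E(\bm{\mathcal{B}})|$ — which holds because within one bouquet the flowers are distinct and equinumerous with the edges, and across distinct bouquets both the flower sets and the edge sets are pairwise disjoint, each of these again forced by condition~(1) — we get $|E(\bm{\mathcal{B}})|\leq\alpha_0^\prime(\mathcal{H})$, hence $d_\mathcal{H}^\prime\leq\alpha_0^\prime(\mathcal{H})$.

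For the converse sentence, fix a minimal vertex cover $C$. Minimality gives, for each $v\in C$, an edge $\mathcal{E}_v\in E(\mathcal{H})$ with $\mathcal{E}_v\cap C=\{v\}$, and these edges are pairwise distinct. Take $\mathcal{B}_v$ to be the one-edge partial hypergraph $(\mathcal{E}_v)$ with $v$ designated as its flower; a single edge is trivially a bouquet. I would then verify that $\bm{\mathcal{B}}=\{\mathcal{B}_v:v\in C\}$ is semi-strongly disjoint: condition~(1) holds because $w\notin\mathcal{E}_v$ for every $w\in C\setminus\{v\}$, and condition~(2) holds because $V(\bm{\mathcal{B}})\setminus F(\bm{\mathcal{B}})\subseteq V(\mathcal{H})\setminus C$, which is independent since $C$ is a vertex cover. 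Since $F(\bm{\mathcal{B}})=C$ and $|E(\bm{\mathcal{B}})|=|C|$, choosing $C$ of maximum cardinality yields $d_\mathcal{H}^\prime\geq\alpha_0^\prime(\mathcal{H})$, and the two inequalities give the claimed equality.

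Most of this is routine once the viewpoint is chosen; the one step I expect to require genuine care is the claim that the greedily extended maximal independent set $I$ avoids every flower, since this is precisely where conditions~(1) and~(2) of semi-strong disjointness are both used in an essential way — and it is the step that would fail under the "arbitrary partial hypergraph" weakening flagged in the Remark, so the pair of graphs in Figures~\ref{fig:minipage1} and~\ref{fig:minipage2} is a useful sanity check on the necessity of condition~(2).
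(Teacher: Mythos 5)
Your proof is correct, but the second half takes a genuinely different (and simpler) route than the paper. For the first direction the substance is the same: the paper shows $F(\bm{\mathcal{B}})$ is a minimal vertex cover of the partial hypergraph $\mathcal{H}|_{V(\bm{\mathcal{B}})}$ and extends it via its Lemma~\ref{l: max cardinality min vertex cover number}, whereas you phrase the extension through the complementary maximal independent set; these are the same argument in different clothing, though your version is more explicit about exactly where conditions (1) and (2) enter (the paper's ``it is minimal by condition (1)'' is rather terse). The real divergence is in the converse: the paper runs an inductive construction that groups the vertices of $C\setminus C_0$ into multi-edge bouquets according to a shared root vertex $r_t$, and must re-verify semi-strong disjointness at every stage; you simply take one single-edge bouquet $(\mathcal{E}_v)$ with flower $v$ for each $v\in C$, which the definitions permit and which makes both conditions of Definition~\ref{semi strongly disjoint hypergraph} immediate from $\mathcal{E}_v\cap C=\{v\}$. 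Since the theorem only asserts the existence of \emph{some} semi-strongly disjoint set with flower set $C$, your degenerate bouquets suffice, and they sidestep a wrinkle in the paper's induction (the edges there are chosen with $\mathcal{E}\cap C_1=\{\ell\}$ rather than $\mathcal{E}\cap C=\{\ell\}$, so an edge of $\mathcal{B}_{t+1}$ could a priori meet a flower of $\bm{\mathcal{B}_0}$). What the paper's heavier construction buys is structurally richer bouquets in the spirit of Kimura's stars, which may matter for other applications, but it is not needed for the equality $\alpha_0^\prime(\mathcal{H})=d_\mathcal{H}^\prime$. Your bookkeeping identity $|F(\bm{\mathcal{B}})|=|E(\bm{\mathcal{B}})|$, needed to pass from flower sets to the edge-counting definition of $d_\mathcal{H}^\prime$, is also correctly justified.
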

\begin{proof} First suppose that $\bm{\mathcal{B}}=\{\mathcal{B}_1,\dots,\mathcal{B}_j\}$ is a semi-strongly disjoint set of bouquets of $\mathcal{H}$. Consider $\mathcal{H}|_{V(\bm{\mathcal{B}})}$, the partial hypergraph of $\mathcal{H}$ on $V(\bm{\mathcal{B}})$. Then $F(\bm{\mathcal{B}})$ is a vertex cover of $\mathcal{H}|_{V(\bm{\mathcal{B}})}$ by the independence of $V(\bm{\mathcal{B}})\setminus F(\bm{\mathcal{B}})$ and it is minimal by condition $(1)$ of Definition \ref{semi strongly disjoint hypergraph}. Thus, by Lemma \ref{l: max cardinality min vertex cover number} the first part of the given statement is verified.

Next, suppose that $C$ is a minimal vertex cover of $\mathcal{H}$. We will construct a set $\bm{\mathcal{B}}$ of semi-strongly disjoint bouquets of $\mathcal{H}$ such that $ F(\bm{\mathcal{B}})= C$. Observe that if $\mathcal{H}$ has any edges of the form $\{u\}$ for some vertex $u$, then $C$ must contain $u$, and every semi-strongly disjoint bouquet can have $\{u\}$ added to it as a bouquet with one edge and $u$ as the flower of that edge. Therefore we may assume that $\mathcal{H}$ has no edges of size one.

 Note that by the minimality of $C$, for every $v\in C$ there exists an edge $\mathcal{E}_v$ of $\mathcal{H}$ such that $\mathcal{E}_v \cap C =\{v\} $. Pick an element $\ell_{1}^1\in C$. Then there exists an edge $\mathcal{E}_{1}^1$ of $\mathcal{H}$ such that $C \cap \mathcal{E}_{1}^1=\{\ell_{1}^1\}$. As $\mathcal{E}_1^1\neq \{\ell_1^1\}$ there exists $r_1\in \mathcal{E}_{1}^1\setminus\{\ell_{1}^1\}$. Suppose that $\ell_{1}^1,\ell_{2}^1,\dots,\ell_{d_1}^1$ are the elements of $C$ that satisfy the property
 \begin{equation}\label{property for basis case}\text{there exists } \mathcal{E}_{i}^1\in E(\mathcal{H}) \text{ such that } \mathcal{E}_{i}^1\cap C =\{\ell_{i}^1\} \text{ and } r_1\in \mathcal{E}_{i}^1 
 \end{equation}
for every $1\leq i\leq d_1$. Let $\mathcal{E}_1^1,\dots,\mathcal{E}_{d_1}^1$ be chosen fixed edges that satisfy the property above. Consider the partial hypergraph $\mathcal{B}_1=(\mathcal{E}_{1}^1,\dots,\mathcal{E}_{d_1}^1)$ of $\mathcal{H}$ with the assigned flowers $\ell_{1}^1,\dots,\ell_{d_1}^1$. 

Now if $F(\mathcal{B}_1)=C$, then $\bm{\mathcal{B}}=\{\mathcal{B}_1\}$ and we are done. Otherwise we keep constructing new bouquets inductively as follows. Suppose that we have semi-strongly disjoint bouquets \{$\mathcal{B}_1,\dots,\mathcal{B}_t$\} such that $\cup_{i=1}^tF(\mathcal{B}_i)$ is a proper subset of $C$. Pick an element $\ell_{1}^{t+1}\in C\setminus\cup_{i=1}^tF(\mathcal{B}_i)$ and an edge $\mathcal{E}_{1}^{t+1}$ of $\mathcal{H}$ such that $\mathcal{E}_1^{t+1}\cap C=\{\ell_{1}^{t+1}\}$. Fix $r_{t+1}\in\mathcal{E}_1^{t+1}\setminus \{\ell_1^{t+1}\}$ and let $\ell_{1}^{t+1},\ell_{2}^{t+1},\dots,\ell_{d_{t+1}}^{t+1}$ be the elements of $C \setminus \cup_{i=1}^tF(\mathcal{B}_i) $ that satisfy the property
\begin{equation}\label{property for inductive case}\text{there exists } \mathcal{E}_{i}^{t+1}\in E(\mathcal{H}) \text{ such that } \mathcal{E}_{i}^{t+1}\cap C =\{\ell_{i}^{t+1}\} \text{ and } r_{t+1}\in \mathcal{E}_{i}^{t+1} 
\end{equation}
for every $1\leq i\leq d_{t+1}$. Let $\mathcal{E}_{1}^{t+1},\dots,\mathcal{E}_{d_{t+1}}^{t+1}$ be chosen fixed edges that satisfy the property above. Consider the partial hypergraph $\mathcal{B}_{t+1}=(\mathcal{E}_{1}^{t+1},\dots,\mathcal{E}_{d_{t+1}}^{t+1})$ of $\mathcal{H}$ as a bouquet with flowers $\ell_{1}^{t+1},\ell_{2}^{t+1},\dots,\ell_{d_{t+1}}^{t+1}$. We shall show that $\{\mathcal{B}_1,\dots,\mathcal{B}_{t+1}\}$ is semi-strongly disjoint. Condition $(1)$ of Definition \ref{semi strongly disjoint hypergraph} clearly holds by construction. To see that the second condition holds, observe that 
$$ V(\{\mathcal{B}_1,\dots,\mathcal{B}_{t+1}\})\setminus F(\{\mathcal{B}_1,\dots,\mathcal{B}_{t+1}\}) = V(\{\mathcal{B}_1,\dots,\mathcal{B}_{t+1}\})\setminus C $$
is independent in $\mathcal{H}$ since $C$ is a vertex cover, so every edge intersects $C$.

Having verified that this construction yields semi-strongly disjoint bouquets at every step, we know that it will terminate as $\mathcal{H}$ has finitely many vertices. In that case, $C=\cup_{i=1}^pF(\mathcal{B}_i)$ for some $p\geq 1$ and $\bm{\mathcal{B}}=\{\mathcal{B}_1,\dots,\mathcal{B}_p\}$ is as desired.

\end{proof}

The following is an immediate consequence of Theorem \ref{l: equality max min vertex cover versus max semi-strongly disjoint}.
\begin{corollary}
Given a simple hypergraph $\mathcal{H}$, we have the following statements. 
\begin{itemize}
\itemsep-0.2em
\item[(1)] If $\bm{\mathcal{B}}$ is a semi-strongly disjoint set of bouquets of $\mathcal{H}$ such that $\left \vert F(\bm{\mathcal{B}})\right \vert =d_\mathcal{H}^\prime$, then $F(\bm{\mathcal{B}})$ is a minimal vertex cover of $\mathcal{H}$ of maximum cardinality.
\item[(2)] If $C$ is a minimal vertex cover of $\mathcal{H}$ of maximum cardinality, then there exists a semi-strongly disjoint set $\bm{\mathcal{B}}$ of bouquets of $\mathcal{H}$ such that $F(\bm{\mathcal{B}})=C$ and $\left\vert F(\bm{\mathcal{B}})\right \vert = d_\mathcal{H}^\prime . $
\end{itemize}
\end{corollary}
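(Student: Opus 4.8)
The plan is to prove the two directions separately and then extract the numerical equality $\alpha_0'(\mathcal{H}) = d_\mathcal{H}'$ as a formal consequence.

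\medskip

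\noindent\textbf{Direction 1: flower sets extend to minimal vertex covers.} Let $\bm{\mathcal{B}} = \{\mathcal{B}_1,\dots,\mathcal{B}_j\}$ be semi-strongly disjoint. First I would restrict attention to $\mathcal{K} = \mathcal{H}|_{V(\bm{\mathcal{B}})}$, so that by Lemma~\ref{l: max cardinality min vertex cover number} it suffices to show $F(\bm{\mathcal{B}})$ is a \emph{minimal} vertex cover of $\mathcal{K}$. For the covering property: every edge $\mathcal{E}$ of $\mathcal{K}$ lies inside $V(\bm{\mathcal{B}})$, hence is not independent in $\mathcal{H}$ relative to $V(\bm{\mathcal{B}})\setminus F(\bm{\mathcal{B}})$, i.e. $\mathcal{E}$ cannot be contained in $V(\bm{\mathcal{B}})\setminus F(\bm{\mathcal{B}})$, so $\mathcal{E}$ meets $F(\bm{\mathcal{B}})$ --- this is exactly condition (2) of Definition~\ref{semi strongly disjoint hypergraph}. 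For minimality: for each flower $\ell_i^{(p)} \in F(\mathcal{B}_p)$ the defining edge $\mathcal{E}_i^{(p)}$ of the bouquet contains $\ell_i^{(p)}$ but, by the bouquet axiom (2), no other flower of $\mathcal{B}_p$; and by condition (1) of Definition~\ref{semi strongly disjoint hypergraph}, $\mathcal{E}_i^{(p)}$ meets no flower of any $\mathcal{B}_q$ with $q\neq p$. Hence $\mathcal{E}_i^{(p)} \cap F(\bm{\mathcal{B}}) = \{\ell_i^{(p)}\}$, so dropping $\ell_i^{(p)}$ leaves $\mathcal{E}_i^{(p)}$ uncovered. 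This shows every element of $F(\bm{\mathcal{B}})$ is essential, so $F(\bm{\mathcal{B}})$ is a minimal vertex cover of $\mathcal{K}$, and Lemma~\ref{l: max cardinality min vertex cover number} gives the extension to $\mathcal{H}$.

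\medskip

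\noindent\textbf{Direction 2: every minimal vertex cover is a flower set.} Given a minimal vertex cover $C$ of $\mathcal{H}$, the key structural fact, which I would establish first, is that minimality gives, for each $v \in C$, an edge $\mathcal{E}_v$ with $\mathcal{E}_v \cap C = \{v\}$ (otherwise $C\setminus\{v\}$ would still be a cover). I would then split $C = C_0 \sqcup C_1$ where $C_0$ is the set of $c\in C$ that form singleton edges $\{c\}$; the singletons are handled by trivial one-edge bouquets $\mathcal{H}|_{\{c\}}$, collected into $\bm{\mathcal{B}_0}$. The substantive work is partitioning $C_1$ into flower sets of honest bouquets. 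I would do this greedily: pick an unassigned $\ell\in C_1$, pick a witness edge $\mathcal{E}_\ell$ (which contains some non-cover vertex $r$ since $\{\ell\}\notin E(\mathcal{H})$ and $\mathcal{E}_\ell\cap C=\{\ell\}$), and then let the new bouquet have flower set equal to \emph{all} currently unassigned elements $\ell'$ of $C_1$ admitting a witness edge through the \emph{same} vertex $r$; the common vertex $r$ is in every chosen edge, so axiom (1) of a bouquet holds, and since each chosen edge meets $C_1$ exactly in its own flower (by the witness property), axiom (2) holds as well. Iterate until $C_1$ is exhausted --- this terminates because $\mathcal{H}$ is finite and each step assigns at least one new element of $C_1$.

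\medskip

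\noindent\textbf{Verifying semi-strong disjointness and concluding.} After building $\bm{\mathcal{B}} = \bm{\mathcal{B}_0}\cup\{\mathcal{B}_1,\dots,\mathcal{B}_p\}$ with $F(\bm{\mathcal{B}}) = C$, I must check Definition~\ref{semi strongly disjoint hypergraph}. Condition (1): if $\mathcal{E}\in E(\mathcal{B}_q)$ then $\mathcal{E}$ is one of the witness edges, so $\mathcal{E}\cap C_1$ is a single flower of $\mathcal{B}_q$, hence $\mathcal{E}$ meets no flower of $\mathcal{B}_p$ for $p\neq q$ among the $C_1$-part; the $C_0$-flowers are singleton-edge vertices, and a witness edge $\mathcal{E}$ with $\mathcal{E}\cap C = \{\ell\}$, $\ell\notin C_0$, cannot contain any $c\in C_0$ since that would put $c\in \mathcal{E}\cap C$. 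Condition (2): $V(\bm{\mathcal{B}})\setminus F(\bm{\mathcal{B}}) = V(\bm{\mathcal{B}})\setminus C$ is a set of non-cover vertices, and no edge of $\mathcal{H}$ lies entirely in the complement of the vertex cover $C$, so in particular no edge lies inside $V(\bm{\mathcal{B}})\setminus C$ --- it is independent. Finally, the numerical equality follows: Direction 1 gives $|F(\bm{\mathcal{B}})| \le \alpha_0'(\mathcal{H})$ for every semi-strongly disjoint $\bm{\mathcal{B}}$, so $d_\mathcal{H}' \le \alpha_0'(\mathcal{H})$, and Direction 2 applied to a maximum-cardinality minimal vertex cover $C$ produces $\bm{\mathcal{B}}$ with $|E(\bm{\mathcal{B}})| \ge |F(\bm{\mathcal{B}})| = |C| = \alpha_0'(\mathcal{H})$, giving $d_\mathcal{H}' \ge \alpha_0'(\mathcal{H})$. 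The main obstacle I anticipate is not any single hard step but the bookkeeping in Direction 2: making sure the witness edges are chosen consistently so that each bouquet's edges genuinely share a common vertex while simultaneously each edge meets $C_1$ in exactly its designated flower --- in other words, that the greedy "group by common non-cover vertex $r$" really does produce valid bouquets and that edges reused across the $C_0$ and $C_1$ parts cannot break condition (1).
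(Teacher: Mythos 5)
Your proposal is correct and follows essentially the same route as the paper: the paper proves exactly these two directions (restricting to $\mathcal{H}|_{V(\bm{\mathcal{B}})}$ and invoking Lemma~\ref{l: max cardinality min vertex cover number} for one direction, and the greedy $C_0/C_1$ split with witness edges grouped by a common non-cover vertex $r$ for the other) as its Theorem~\ref{l: equality max min vertex cover versus max semi-strongly disjoint}, from which the corollary is immediate. Your verification of condition (2) of Definition~\ref{semi strongly disjoint hypergraph} even fixes a small typo in the paper, which writes ``is not independent'' where it means ``is independent.''
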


Lastly, we state the following corollary regarding projective dimension of edge ideals.
\begin{corollary} Let $I$ be the edge ideal of a simple hypergraph $\mathcal{H}$. Then $\operatorname{pd} (S/I) \geq d_\mathcal{H}^\prime$.
\end{corollary}
\begin{proof}
By \cite[Corollary~3.33]{morey villarreal}, we have $\operatorname{pd} (S/I) \geq \alpha_0^\prime (\mathcal{H}). $ Hence the proof is immediate from Theorem \ref{l: equality max min vertex cover versus max semi-strongly disjoint}.
\end{proof}

\section{Computation of $d_{\mathcal{H}}'$ and further examples of bouquets}
If $G$ is a graph on $n$ vertices, then $d_G'\leq n-1$ since for every bouquet $\mathcal{B}=(\mathcal{E}_1,\ldots ,\mathcal{E}_d)$, the intersection $\cap_{i=1}^d\mathcal{E}_i$ is non-empty. In the next example, we will use this observation to compute $d_G'$.

\begin{example}
	Let $G$ be the graph whose edge ideal is $I(G)=(ad, ae, af, bd, be, bf, ce, cf)$. Notice that $G$ is a bipartite graph on $6$ vertices with bipartition $(X, Y)$ where $X=\{a,b,c\}$ and $Y=\{d,e,f\}$. From the discussion above we know that $d_G'\leq 5$. Observe that $G$ has no vertex of degree $5$. So, $G$ cannot have semi-strongly disjoint bouquets with $5$ flowers and, $d_G'\leq 4$.
	
	Now let $\mathcal{B}_1=(\{a,d\}, \{b,d\})$ and $\mathcal{B}_2=(\{c,e\}, \{c,f\} )$. Then $\bm{\mathcal{B}}=\{B_1, B_2\}$ is semi-strongly disjoint in $G$ as $\{c,d\}$ is independent. Therefore $d_G'=4$.
\end{example}

\begin{example}
	In \cite{hoefel} Hoefel and Mermin defined \textit{supernovas} to characterize Gotzmann squarefree monomial ideals. A $d$-dimensional simplicial complex $\Delta$ is called a \textit{supernova} if there exists a chain of faces $\emptyset \subset F_0 \subset F_1 \subset \cdots \subset F_{d-1}$ such that every $i$-dimensional facet of $\Delta$ contains the $(i-1)$-dimensional face $F_{i-1}$. Observe that the facets of the supernova $\Delta$ have a nonempty intersection as $F_0$ is contained in every facet of $\Delta$. Moreover, if $G$ is an $i$-dimensional facet, then the vertex in $G\setminus F_{i-1}$ is a free vertex of $\Delta$. Therefore every supernova, when considered as a hypergraph,  is a bouquet. 
	
	On the other hand, not every bouquet is a supernova. For example, let $\mathcal{B}$ be a bouquet with facets (or edges) $ \{a,b,d\},\{a,b,c,f\},\{b,c,e\}$ and flowers $d,e,f$. Then $\mathcal{B}$ is not a supernova. To see this, notice that if $G$ and $H$ are distinct facets of a supernova with the same dimension, then $G\setminus F$ is a $0$-dimensional face of the supernova. But we see that $\{a,b,d\}\setminus \{b,c,e\}=\{a,d\}$ is a $1$-dimensional face of $\mathcal{B}$. 
\end{example}

\begin{example}
	Let $\mathcal{H}$ be the hypergraph whose edge ideal is $I(\mathcal{H})=(abc, bde, bce, cef)$. Suppose that a semi-strongly disjoint bouquets $\bm{\mathcal{B}}$ of $\mathcal{H}$ contains the edge $\{b,c,e\}$ where $b$ is the assigned flower of that edge. Then the edges $\{a,b,c\}, \{b,d,e\}\notin E(\bm{\mathcal{B}})$ by condition (1) of Definition~\ref{semi strongly disjoint hypergraph}. By symmetry, we can conclude that if $\{b,c,e\}$ is an edge of semi-strongly disjoint bouquets of $\mathcal{H}$, then at least $2$ of the other edges of $\mathcal{H}$ are not edges of the semi-strongly disjoint bouquets. Therefore $d_{\mathcal{H}}'\leq 3$.
	
	From the argument above, the only case a semi-strongly disjoint bouquets $\bm{\mathcal{B}}$ can have $3$ edges is when $E(\bm{\mathcal{B}})=\{\{a,b,c\}, \{b,d,e\}, \{c,e,f\} \}$. This requires $F(\bm{\mathcal{B}})=\{a,d,f\}$. But then $V(\bm{\mathcal{B}})\setminus \{a,d,f\}$ is not independent in $\mathcal{H}$. So this case is not possible and, it follows that $d_{\mathcal{H}}'\leq 2$.
	
	Now observe that $\mathcal{H}$ has a semi-strongly disjoint bouquets with $2$ edges. Indeed, if $\mathcal{B}_1=(\{a,b,c\})$ is a bouquet with flower $a$ and, $\mathcal{B}_2=(\{c,e,f\})$ is a bouquet with flower $e$, then $\{\mathcal{B}_1, \mathcal{B}_2\}$ is semi-strongly disjoint. Thus $d_{\mathcal{H}}'= 2$.
\end{example}

\vskip0.3in
\noindent {\bf \large Acknowledgments:} I would like to thank my supervisor Sara Faridi for very useful discussions which improved the presentation of this paper. I also thank Fahimeh Khosh-Ahang Ghasr for pointing out some flaws in the early version of this work.


\begin{thebibliography}{10}
\setlength{\itemsep}{0pt}
\footnotesize
\bibitem{berge} C. Berge, \emph{Hypergraphs}, North-Holland 1989. 
\bibitem{dao schweig} H. Dao, J. Schweig, \emph{Projective dimension, graph domination parameters, and independence complex homology}, J. Combin. Theory Ser. A 120 (2013), no. 2, 453--469.

\bibitem{faridi} S. Faridi, \emph{The projective dimension of sequentially Cohen-Macaulay monomial ideals}, math.AC/1310.5598, 2013.

\bibitem{francisco van tuyl}C. A. Francisco, A. Van Tuyl, \emph{Sequentially Cohen-Macaulay edge ideals}, Proc. Amer. Math. Soc. 135 (2007), 2327--2337.

\bibitem{ha van tuyl}H. T. H\`{a}, A. Van Tuyl, \emph{Monomial ideals, edge ideals of hypergraphs, and their graded Betti numbers}, J. Algebraic Combin. 27 (2008), 215--245.

\bibitem{hoefel} A. H. Hoefel, J. Mermin,
\emph{Gotzmann squarefree ideals}, Illinois J. Math. 56 (2012), no. 2, 397--414. 

\bibitem{khosh moradi} F. Khosh-Ahang, S. Moradi, \emph{Codismantlable hypergraphs, projective dimension and regularity of edge ideal of special hypergraphs}, math.AC/1305.5954v1, 2013.

\bibitem{c-5 free Khosh Moradi} F. Khosh-Ahang, S. Moradi, \emph{Regularity and projective dimension of edge ideal of $C_5$-free vertex decomposable graphs},  Proc. Amer. Math. Soc. 142 (2014), no. 5, 1567--1576.
\bibitem{kimura}K. Kimura, \emph{Non-vanishingness of Betti numbers of edge ideals}, Harmony of Gr\"obner bases and the modern industrial society, 153--168, World Sci. Publ., Hackensack, NJ, 2012.
\bibitem{morey villarreal} S. Morey, R.H. Villarreal, \emph{Edge ideals: algebraic and combinatorial properties}, Progress in commutative algebra 1, 85--126, de Gruyter, Berlin, 2012.
\bibitem{Woodroofe} R. Woodroofe, \emph{Vertex decomposable graphs and obstructions to shellability}, Proc. Amer. Math. Soc. 137 (2009), no. 10, 3235--3246. 
\bibitem{zheng} X. Zheng, \emph{Resolutions of facet ideals}, Comm. Algebra 32 (2004) 2301--2324.

\end{thebibliography}
\end{document}